\DeclareMathOperator*{\argmin}{arg\,min}
\begin{document}
\title{Algorithms for Solving Variational inequalities and Saddle Point Problems with some Generalizations of Lipschitz Property for Operators\thanks{The research in Introduction and Sections 2, 3 is supported by the Ministry of Science and Higher Education of the Russian Federation (Goszadaniye in MIPT, project 075-00337-20-03).
The research in Algorithm 3 was partially supported by the grant of the President of Russian Federation for young candidates of sciences (project MK-15.2020.1). The research in Theorem 1 and partially in section 2.1 was supported by the Russian Science Foundation (project 18-71-10044).}}
\titlerunning{Algorithms for Variational Inequalities and Saddle Point Problems}

\author{Alexander A. Titov \inst{1,3} \orcidID{0000-0001-9672-0616} \and Fedor S. Stonyakin  \inst{1,}\inst{2} \orcidID{0000-0002-9250-4438}\and Mohammad S. Alkousa\inst{1,3} \orcidID{0000-0001-5470-0182} \and Alexander V. Gasnikov \inst{1,3,4,5} \orcidID{0000-0002-5982-8983}}
	
\authorrunning{A. Titov et al.}

\institute{Moscow Institute of Physics and Technology, Moscow, Russia
	\and
	V.\,I.\,Vernadsky Crimean Federal University, Simferopol, Russia\\
	\and 
	HSE University, Moscow, Russia
	\and
	Institute for Information Transmission Problems RAS, Moscow, Russia
	\and
	Caucasus Mathematical Center, Adyghe State University, Maikop, Russia\\
	\email{a.a.titov@phystech.edu, fedyor@mail.ru,
		mohammad.alkousa@phystech.edu,
		gasnikov@yandex.ru}
	}

\maketitle
\begin{abstract}
The article is devoted to the development of numerical methods for solving saddle point problems and variational inequalities with simplified requirements for the smoothness conditions of functionals. Recently, some notable methods for optimization problems with strongly monotone operators were proposed. Our focus here is on newly proposed techniques for solving strongly convex-concave saddle point problems. One of the goals of the article is to improve the obtained estimates of the complexity of introduced algorithms by using accelerated methods for solving auxiliary problems. The second focus of the article is introducing an analogue of the boundedness condition for the operator in the case of arbitrary (not necessarily Euclidean) prox structure. We propose an analogue of the Mirror Descent method for solving variational inequalities with such operators, which is optimal in the considered class of problems.

\keywords{Strongly Convex Programming Problem. Relative Boundedness. Inexact Model. Variational Inequality. Saddle Point Problem.}
\end{abstract}

\section*{Introduction}\label{sec1_introduction}
Modern numerical optimization methods are widely used in solving problems in various fields of science. The problem of finding the optimal value in the investigated mathematical model naturally arises in machine learning, data analysis, economics, market equilibrium problems, electric power systems, control theory, optimal transport, molecular modeling, etc. This paper is devoted to the development and analysis of numerical methods for solving saddle point problems and variational inequalities. 
Both saddle point problems and variational inequalities play a critical role in structural analysis, resistive networks, image processing, zero-sum game, and Nash equilibrium problems \cite{Saddle,Jud2011,Nemir2004,Scutari}. 

Despite variational inequalities and saddle point problems are closely related (it will be discussed later), the article consists of two conditionally independent parts, each devoted to the special type of generalization of smoothness conditions and the corresponding problem. 

Firstly, the paper focuses on recently proposed numerical methods for solving ($\mu_x,\mu_y$)-strongly convex-concave saddle point problems of the following form:
\begin{equation}\label{main}
\min\limits_{x}\max\limits_{y} f(x,y).
\end{equation} 

 In particular, in \cite{ST} authors considered a modification of some well-known scheme for speeding up methods for solving {\it smooth} saddle point problems. 
The smoothness, in such a setting, means the Lipschitz continuity of all partial gradients of the objective function $f$. We improve the obtained estimates of the complexity for the case of a non-smooth objective. 

In more details, we consider the saddle point problem under the assumption, that one of the partial gradients still satisfies the Lipschitz condition, while the other three satisfy simplified smoothness condition, namely, the H\"older continuity. Note, it is the Lipschitz continuity, which makes it possible to use accelerated methods for solving auxiliary problems. The H\"older continuity is an important generalization of the Lipschitz condition, it appears in a large number of applications \cite{NesterovUGM,UMP}. In particular, if a function is uniformly convex, then its conjugated will necessarily have the H\"older-continuous gradient \cite{NesterovUGM}.

We prove, that the strongly convex-concave saddle point problem, modified in a functional form, admits an inexact $(\delta,L,\mu_x)$--model, where $\delta$ is comparable to $\varepsilon$, and apply the Fast Gradient Method to achieve the $\varepsilon$--solution of the considered problem. The total number of iterations is estimated as follows:
$$
    \mathcal{O}\left(\sqrt{\frac{L}{\mu_x}}\cdot\sqrt{\frac{L_{yy}}{\mu_y}}\cdot \log\frac{2L_{yy}R^2}{\varepsilon}\cdot\log{\frac{2LD^2}{\varepsilon}}\right),
$$
where for $0 \leq \nu < 1$ $L = \tilde{L}\left( \frac{\tilde{L} (1-\nu)}{2\varepsilon} \right)^{1-\nu},
\tilde{L} = \left( L_{xy}\left(\frac{2L_{xy}}{\mu_y}\right)^{\frac{\nu}{2-\nu}}+L_{xx}D^{\frac{\nu-\nu^2}{2-\nu}} \right)$, \\$L_{xx},L_{xy}, L_{yy}>0$, $\nu$ is the H\"older exponent of $\nabla f$, $D$ is a diameter of the domain of $f(x,\cdot)$, $R$ denotes the distance between the initial point of the algorithm $x^0$ and the exact solution of the problem. Remind that $(\tilde{x},\tilde{y})$ is called an $\varepsilon$--solution of the saddle point problem, 
if 
\begin{equation}\label{eps_solut_saddle}
    \max\limits_y f(\tilde{x},y) - \min\limits_x f(x,\tilde{y})\leq \varepsilon.
\end{equation}

Let us note, that due to the strong convexity-concavity of the considered saddle point problem, the convergence in argument takes place with the similar asymptotic behavior. Thus, we can consider the introduced definition of the $\varepsilon$-solution \eqref{eps_solut_saddle} for the strongly convex-concave saddle point problem \eqref{main}.

The second part of the article continues with the study of accelerated methods and is devoted to the numerical experiments for some recently proposed \cite{UMP} universal algorithms. In particular, we consider the restarted version of the algorithm and apply it to the analogue of the covering circle problem with non-smooth functional constraints. We show, that the method can work faster, than $O(\frac{1}{\varepsilon}).$

Finally, the third part of the article explores Minty variational inequalities with the Relatively bounded operator. Recently Y. Nesterov proposed \cite{Nesterov_Relative_Smoothness} a generalization of the Lipschitz condition, which consists in abandoning the classical boundedness of the norm of the objective`s gradient $\nabla f$ in favor of a more complex structure. This structure allows one to take into account the peculiarities of the domain of the optimization problem, which can be effectively used in solving support vector machine (SVM) problem and intersection of $n$ ellipsoids problem \cite{LuRel,Titov_Rel}.

Remind the formulation of the Minty variational inequality. For a given operator $g(x):X\rightarrow\mathbb{R}$, where $X$ is a closed convex subset of some finite-dimensional vector space, we need to find a vector $x_*\in X$, such that
\begin{equation}\label{VI}
  \langle g(x),x_*-x\rangle\leq 0 \quad \forall x\in X.
\end{equation}

We consider the variational inequality problem under the assumption of Relative boundedness of the operator $g$, which is the modification of the aforementioned Relative Lipschitz condition for functionals. We introduce the modification of the Mirror Descent method to solve such variational inequality problems. The proposed method guarantees an $(\varepsilon+\sigma)$--solution of the problem after no more than $$\frac{2RM^2}{\varepsilon^2}$$ iterations, where $M$ is the Relative boundedness constant, which depends on the characteristics of the operator $g$ and $R$ can be understood as the distance between the initial point of the algorithm and the exact solution of the variational inequality in some generalized sense. The constant $\sigma$ reflects certain features of the monotonicity of the operator $g$. An $(\varepsilon+\sigma)$--solution of the variational inequality is understood as the point $\Tilde{x},$ such that
\begin{equation}\label{eps_VI}
    \max_{x\in X}\langle g(x),\Tilde{x}-x \rangle \leq \varepsilon+\sigma.
\end{equation}

The paper consists of the introduction and three main sections.
In Sect. \ref{section_Saddle_Point_Problem}, we consider the strongly convex-concave saddle point problem in a non-smooth setting.
Sect. \ref{sect_comparision}, is devoted to some numerical experiments concerning the methods, recently proposed in \cite{UMP} and the analysis of their asymptotics in comparison with the method proposed in Sect. \ref{section_Saddle_Point_Problem}.
In Sect. \ref{sect_mirror_VI}, we consider the Minty variational inequality problem with Relatively Bounded operator.

To sum it up, the contributions of the paper can be formulated as follows:

\begin{itemize}
	\item We consider the strongly convex-concave saddle point problem and its modified functional form. We prove, that the functional form admits an inexact $(\delta,L,\mu_x)$--model and apply the Fast Gradient Method to achieve the $\varepsilon$--solution of the considered problem. Moreover, we show, that $\delta = \mathcal{O}(\varepsilon).$ 
	\item We show, that the total number of iterations of the proposed method does not exceed
$$
    \mathcal{O}\left(\sqrt{\frac{L}{\mu_x}}\cdot\sqrt{\frac{L_{yy}}{\mu_y}}\cdot \log\frac{2L_{yy}R^2}{\varepsilon}\cdot\log{\frac{2LD^2}{\varepsilon}}\right),
$$
where for $0 \leq \nu < 1$, $L = \tilde{L}\left( \frac{\tilde{L} (1-\nu)}{2\varepsilon} \right)^{1-\nu},
\tilde{L} = \left( L_{xy}\left(\frac{2L_{xy}}{\mu_y}\right)^{\frac{\nu}{2-\nu}}+L_{xx}D^{\frac{\nu-\nu^2}{2-\nu}} \right)$.

    \item We introduce the modification of the Mirror Descent method to solve Minty variational inequalities with Relatively Bounded and $\sigma$--monotone operators.
	
    \item We show, that the proposed method can be applied to obtain an\\ $(\varepsilon+\sigma)$--solution after no more than $\frac{2RM^2}{\varepsilon^2}$ iterations.

\end{itemize}

\section{Accelerated Method for Saddle Point Problems with Generalized Smoothness Condition} \label{section_Saddle_Point_Problem}

Let $Q_x \subset \mathbb{R}^n$ and $Q_y \subset \mathbb{R}^m$ be nonempty, convex, compact sets and there exist $D >0$ and $R>0$, such that
$$
    \|x_1-x_2\|_2\leq D \quad  \forall x_1, \,x_2\in Q_x,
$$
$$
    \|y_1-y_2\|_2\leq R \quad  \forall y_1, \, y_2\in Q_y,
$$

Let  $f: Q_x\times Q_y\rightarrow\mathbb{R}$ be a  $\mu_x$-strongly convex function for fixed $y \in Q_y$ and $\mu_y$-strongly concave for fixed $x\in Q_x$. Remind, that differentiable function $h(x):Q_x\rightarrow\mathbb{R}$ is called $\mu$-strongly convex, if
$$\langle \nabla h(x_1)-\nabla h(x_2), x_1 - x_2\rangle\geq \mu\|x_1-x_2\|^2_2 \quad \forall x_1,x_2 \in Q_x. $$
Consider $(\mu_x,\mu_y)$-strongly convex--concave saddle point problem \eqref{main} under assumptions, that one of the partial gradients of $f$ satisfies the Lipschitz condition, while other three gradients satisfy the H\"older condition.
More formally, for any $x,x' \in Q_x, y,y'\in Q_y$ and for some $\nu \in [0,1]$, the following inequalities hold:
\begin{equation} \label{L_xx}
\| \nabla_xf(x,y) - \nabla_xf(x',y)\|_2 \leq L_{xx}\|x-x'\|_2^{\nu},
\end{equation} 
\begin{equation} \label{L_xy}
\| \nabla_xf(x,y) - \nabla_xf(x,y')\|_2 \leq L_{xy}\|y-y'\|_2^{\nu},
\end{equation} 
\begin{equation} \label{L_yx}
\| \nabla_yf(x,y) - \nabla_yf(x',y)\|_2 \leq L_{xy}\|x-x'\|_2^{\nu},
\end{equation} 
\begin{equation} \label{L_yy}
\| \nabla_yf(x,y) - \nabla_yf(x,y')\|_2 \leq L_{yy}\|y-y'\|_2.
\end{equation} 

Define the following function:
\begin{equation}\label{maximization_problem}
g(x) = \max\limits_{y\in Q_y} f(x,y), \quad x \in Q_x.
\end{equation}
It is obvious, that the considered saddle point problem \eqref{main} can be rewritten in the following more simple way:
\begin{equation}\label{modif}
g(x) = \max\limits_{y\in Q_y} f(x,y) \rightarrow \min\limits_{x\in Q_x}.
\end{equation}


Since $f(x,\cdot)$ is $\mu_y$-strongly concave on $Q_y$, the maximization problem \eqref{maximization_problem} has the unique solution
$$
    y^*(x) = \arg\max\limits_{y\in Q_y} f(x,y) \quad \forall x\in Q_x,
$$
so $g(x) = f(x,y^*(x)).$
Moreover, 
\begin{equation} \label{str_conc}
\|y^*(x_1)-y^*(x_2)\|_2^2\leq \frac{2}{\mu_y}\bigg(f\Big(x_1,y^*(x_1)\Big)-f\Big(x_1,y^*(x_2)\Big)\bigg)\;\; \forall x_1, x_2 \in Q_x. 
\end{equation} 

\begin{lemma}\label{lemm1}
Consider the problem \ref{main} under assumptions \eqref{L_xx}--\eqref{L_yy}. Define the function $g( x): Q_x \to \mathbb{R}$ according to \eqref{maximization_problem}. Then $g(x)$ has the H\"{o}lder continuous gradient with H\"{o}lder constant $\Biggl( L_{xy}\left(\frac{2L_{xy}}{\mu_y}\right)^{\frac{\nu}{2-\nu}}+L_{xx}D^{\frac{\nu-\nu^2}{2-\nu}} \Biggl)$ and H\"{o}lder exponent $\frac{\nu}{2-\nu}.$

\end{lemma}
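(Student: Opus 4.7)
The plan is to invoke the envelope theorem for $g$, derive a Hölder stability estimate for the inner maximizer $y^*$, split the outer gradient difference via the triangle inequality through an intermediate ``cross'' point, and finally rebalance the two resulting exponents using the diameter $D$.

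Since $f(x,\cdot)$ is $\mu_y$-strongly concave on the compact convex $Q_y$, the maximizer $y^*(x)$ is well-defined and unique, so Danskin's theorem gives $\nabla g(x) = \nabla_x f(x, y^*(x))$. Next I would show that $\|y^*(x_1)-y^*(x_2)\| \le \frac{2L_{xy}}{\mu_y}\|x_1-x_2\|^{\nu}$. Starting from \eqref{str_conc} and using the optimality inequality $f(x_2, y^*(x_1)) \le f(x_2, y^*(x_2))$, I rewrite $f(x_1,y^*(x_1)) - f(x_1,y^*(x_2))$ as a variation of the auxiliary function $h(y) := f(x_1,y) - f(x_2,y)$ between the two maximizers. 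Since by \eqref{L_yx} we have $\|\nabla h(y)\| \le L_{xy}\|x_1-x_2\|^{\nu}$, the mean-value inequality gives $h(y^*(x_1)) - h(y^*(x_2)) \le L_{xy}\|x_1-x_2\|^{\nu}\|y^*(x_1)-y^*(x_2)\|$; substituting into \eqref{str_conc} and cancelling a single factor of the $y^*$-difference yields the Hölder estimate on $y^*$.

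With this estimate in hand, I apply the triangle inequality
\[
\|\nabla g(x_1)-\nabla g(x_2)\| \le \|\nabla_x f(x_1, y^*(x_1)) - \nabla_x f(x_1, y^*(x_2))\| + \|\nabla_x f(x_1, y^*(x_2)) - \nabla_x f(x_2, y^*(x_2))\|,
\]
bound the first summand by \eqref{L_xy} and the second by \eqref{L_xx}, and plug in the bound on $y^*$ to arrive at an upper estimate of the form $L_{xy}\bigl(\tfrac{2L_{xy}}{\mu_y}\bigr)^{\nu}\|x_1-x_2\|^{\nu^2} + L_{xx}\|x_1-x_2\|^{\nu}$.

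The main obstacle is the last step: consolidating the two different exponents $\nu^2$ and $\nu$ into the single announced Hölder exponent $\nu/(2-\nu)$, along with the precise constants $L_{xy}(2L_{xy}/\mu_y)^{\nu/(2-\nu)}$ and $L_{xx}D^{(\nu-\nu^2)/(2-\nu)}$. I expect this to be enforced either by sharpening the Hölder estimate on $y^*$ to exponent $1/(2-\nu)$ through a self-improvement argument built on the combination of the strong concavity and the $L_{yy}$-smoothness in $y$, or by a Young-type interpolation calibrated against the diameter $D$ to redistribute the powers of $\|x_1-x_2\|$ between the two summands. The arithmetic of balancing these exponents to land on the exact constants stated in the lemma is the delicate bookkeeping step of the proof.
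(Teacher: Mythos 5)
Your setup (Danskin, the triangle inequality through the cross point $(x_1,y^*(x_2))$, and the diameter trick on the $L_{xx}$-term) matches the paper, but the proof as proposed does not close, and the obstacle you flag at the end is a real gap, not mere bookkeeping. The stability estimate you derive for the maximizer, $\|y^*(x_1)-y^*(x_2)\|_2 \le \tfrac{2L_{xy}}{\mu_y}\|x_1-x_2\|_2^{\nu}$, is correctly obtained but too weak: feeding it into \eqref{L_xy} gives a first summand of order $\|x_1-x_2\|_2^{\nu^2}$, and since $\nu^2 \le \tfrac{\nu}{2-\nu}$ for all $\nu\in[0,1]$ (with strict inequality for $\nu\in(0,1)$), this exponent sits \emph{below} the announced one. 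The diameter/Young rebalancing you propose can only \emph{lower} an exponent (via $\|x_1-x_2\|_2^{a}\le D^{a-b}\|x_1-x_2\|_2^{b}$ for $a\ge b$, which is exactly how the paper treats the $L_{xx}$-term); it cannot raise $\nu^2$ up to $\tfrac{\nu}{2-\nu}$, because that would require bounding a negative power of $\|x_1-x_2\|_2$ as $x_1\to x_2$. So no amount of interpolation rescues the first summand, and your argument only proves Hölder continuity with exponent $\nu^2$.

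The missing idea is your alternative (a), but its mechanism is different from what you guess: no self-improvement and no use of the $L_{yy}$-condition (which plays no role in this lemma). Instead of applying the mean value inequality in the $y$-variable to $h(y)=f(x_1,y)-f(x_2,y)$ with the bound \eqref{L_yx}, apply it in the $x$-variable to $\phi(x)=f(x,y^*(x_1))-f(x,y^*(x_2))$ and bound $\nabla\phi$ by \eqref{L_xy}. Together with $\phi(x_2)\le 0$ (optimality of $y^*(x_2)$) this yields
\[
f\big(x_1,y^*(x_1)\big)-f\big(x_1,y^*(x_2)\big)\;\le\; L_{xy}\,\|y^*(x_1)-y^*(x_2)\|_2^{\nu}\,\|x_1-x_2\|_2,
\]
i.e.\ the full first power of $\|x_1-x_2\|_2$ and only the $\nu$-th power of the $y^*$-difference — the powers are swapped relative to your version. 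Plugging this into \eqref{str_conc} and cancelling $\|y^*(x_1)-y^*(x_2)\|_2^{\nu}$ (rather than a full first power) gives $\|y^*(x_1)-y^*(x_2)\|_2^{2-\nu}\le \tfrac{2L_{xy}}{\mu_y}\|x_1-x_2\|_2$, hence the sharper stability exponent $\tfrac{1}{2-\nu}$ with constant $\big(\tfrac{2L_{xy}}{\mu_y}\big)^{\frac{1}{2-\nu}}$. With this, the first summand becomes $L_{xy}\big(\tfrac{2L_{xy}}{\mu_y}\big)^{\frac{\nu}{2-\nu}}\|x_1-x_2\|_2^{\frac{\nu}{2-\nu}}$ exactly, and only the $L_{xx}$-term needs the diameter to come down from exponent $\nu$ to $\tfrac{\nu}{2-\nu}$, producing the stated constant.
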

\begin{proof}
Similarly to \cite{ST}, for any $x_1, x_2 \in Q_x$, let us estimate the following difference:
$$
    \bigg(f\Big(x_1,y^*(x_1)\Big)-f\Big(x_1,y^*(x_2)\Big)\bigg) - \bigg(f\Big(x_2,y^*(x_1)\Big) - f\Big(x_2,y^*(x_2)\Big)\bigg) \leq
$$
$$
    \leq \Big\|\nabla_xf\Big(x_1+t(x_2-x_1),y^*(x_1)\Big) - \nabla_x f\Big(x_1+t(x_2-x_1),y^*(x_2)\Big) \Big\|_2 \cdot \|x_2-x_1\|_2 \leq
$$
$$
    \leq L_{xy}\|y^*(x_1)-y^*(x_2)\|_2^{\nu}\cdot\|x_2 - x_1\|_2, \quad t\in[0,1].
$$

Using \eqref{str_conc}, one can get
$$
    \|y^*(x_1) - y^*(x_2)\|_2 \leq \left(\frac{2L_{xy}}{\mu_y}\right)^{\frac{1}{2-\nu}}\|x_2-x_1\|_2^{\frac{1}{2-\nu}},
$$
which means, that $y^*(x)$ satisfies the H\"{o}lder condition on $Q_x$ with H\"{o}lder constant $\left(\frac{2L_{xy}}{\mu_y}\right)^{\frac{1}{2-\nu}}$ and H\"{o}lder exponent $\frac{1}{2-\nu} \in [\frac{1}{2},1]$.

Further, 
$$
\| \nabla g(x_1) - \nabla g(x_2)\|_2 = \|\nabla_xf(x_1,y^*(x_1)) - \nabla_x f(x_2,y^*(x_2))\|_2 \leq$$
$$
\leq \|\nabla_x f(x_1,y^*(x_1)) - \nabla_x f(x_1, y^*(x_2))\|_2 + \|\nabla_x f(x_1,y^*(x_2))-\nabla_x f(x_2,y^*(x_2)) \|_2 \leq
$$
$$
\leq L_{xy}\|y^*(x_1)-y^*(x_2)\|_2^{\nu} + L_{xx}\|x_2-x_1\|_2^{\nu} = 
$$
$$ 
\leq L_{xy}\left(\frac{2L_{xy}}{\mu_y}\right)^{\frac{\nu}{2-\nu}}\|x_2-x_1\|_2^{\frac{\nu}{2-\nu}} + L_{xx}\|x_2-x_1\|_2^{\nu} = 
$$
$$ 
= L_{xy}\left(\frac{2L_{xy}}{\mu_y}\right)^{\frac{\nu}{2-\nu}}\|x_2-x_1\|_2^{\frac{\nu}{2-\nu}} + L_{xx}\|x_2-x_1\|_2^{\frac{\nu}{2-\nu}}\cdot \|x_2-x_1\|_2^{\frac{\nu-\nu^2}{2-\nu}}.
$$

Since $Q_x$ is bounded, we have
\begin{equation}\label{Hol_g}
\| \nabla g(x_1) - \nabla g(x_2)\|_2 \leq \Biggl( L_{xy}\left(\frac{2L_{xy}}{\mu_y}\right)^{\frac{\nu}{2-\nu}}+L_{xx}D^{\frac{\nu-\nu^2}{2-\nu}} \Biggl)\|x_2-x_1\|^{\frac{\nu}{2-\nu}}_2,
\end{equation}
which means, that $g(x)$ has the H\"{o}lder continuous gradient.
\end{proof}

\begin{definition}
A function $h(x):Q_x \to \mathbb{R}$ admits $(\delta,L,\mu)$-model, if, for any \\$x_1,x_2 \in Q_x$, the following inequalities hold:
\begin{dmath}\label{model}
\frac{\mu}{2}\|x_2-x_1\|_2^2 + \langle \nabla h(x_1),x_2-x_1 \rangle + h(x_1) - \delta \leq h(x_2) \leq h(x_1) + \langle \nabla h(x_1),x_2-x_1 \rangle + \frac{L}{2}\|x_2-x_1\|_2^2 + \delta
\end{dmath}
\end{definition}

\begin{remark}\cite{Gasnikov} 
Note, that if a function $h(x)$ has the  H\"{o}lder-continuous gradient with H\"{o}lder constant $\tilde{L}_{\tilde{\nu}}$ and H\"{o}lder exponent $\tilde{\nu}$, then $h(x)$ admits $(\delta,L,\mu)$-model. More precisely, inequalities \eqref{model} hold with $L = \tilde{L}_{\tilde{\nu}}\left(\frac{\tilde{L}_{\tilde{\nu}}}{2\delta}\frac{1-\tilde{\nu}}{1+\tilde{\nu}}  \right)^{\frac{1-\tilde{\nu}}{1+\tilde{\nu}}}.$
\end{remark}

\begin{remark}\label{rem2}
According to Lemma \ref{lemm1}, $g(x)$ has the H\"{o}lder continuous gradient, so $g(x)$ admits $(\delta_0,L,\mu_x)$-model ($\delta$ was replaced by $\delta_0$ to simplify notation) with 
$$
L = \tilde{L}\left( \frac{\tilde{L} (1-\nu)}{2\varepsilon} \right)^{1-\nu},
$$
where
$\tilde{L} = \left( L_{xy}\left(\frac{2L_{xy}}{\mu_y}\right)^{\frac{\nu}{2-\nu}}+L_{xx}D^{\frac{\nu-\nu^2}{2-\nu}} \right).$
\end{remark}

Let us now assume, that instead of the ordinary gradient $\nabla g(x)$ we are given an inexact one $\widetilde{\nabla} g(x)\coloneqq \nabla_x f(x,\widetilde{y})$, such that 
\begin{equation}\label{acc_y}
\|y_1-\widetilde{y}\|_2\leq \widetilde{\Delta},
\end{equation}
where $f(x,y_1) = \max\limits_{y\in Q_y}f(x,y), \widetilde{\Delta} = const. > 0.$

\begin{theorem}
Consider the strongly convex-concave saddle point problem \eqref{main} under assumptions \eqref{L_xx}-\eqref{L_yy}. Define the function $g(x)$ according to \eqref{modif}. Then $g(x)$ admits an inexact $(\delta,L,\mu_x)$-model with $\delta = (D\Delta + \delta_{0})$ and $L$ defined in Remark \ref{rem2}. Applying $k$ steps of the Fast Gradient Method to the "outer" problem \eqref{modif} and solving the "inner" problem \eqref{acc_y} in linear time, we obtain an $\varepsilon$-solution \eqref{eps_solut_saddle} to the problem \eqref{main},
where $\delta = \mathcal{O}(\varepsilon)$. The total number of iterations does not exceed 
$$
    \mathcal{O}\left(\sqrt{\frac{L}{\mu_x}}\cdot\sqrt{\frac{L_{yy}}{\mu_y}}\cdot \log\frac{2L_{yy}R^2}{\varepsilon}\cdot\log{\frac{2LD^2}{\varepsilon}}\right),
$$
where $L = \tilde{L}\left( \frac{\tilde{L} (1-\nu)}{2\varepsilon} \right)^{1-\nu},
\tilde{L} = \left( L_{xy}\left(\frac{2L_{xy}}{\mu_y}\right)^{\frac{\nu}{2-\nu}}+L_{xx}D^{\frac{\nu-\nu^2}{2-\nu}} \right)$. 
\end{theorem}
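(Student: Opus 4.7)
The plan is to reduce the saddle point problem \eqref{main} to the outer minimization $\min_{x\in Q_x} g(x)$ with $g(x)=\max_{y\in Q_y}f(x,y)$, verify that $g$ admits an \emph{inexact} $(\delta, L, \mu_x)$-model when only an approximate maximizer $\widetilde{y}$ satisfying \eqref{acc_y} is available, and then apply the Fast Gradient Method with inexact oracle to the outer variable $x$. Each outer step would call an accelerated routine on the $\mu_y$-strongly concave, $L_{yy}$-smooth inner problem to obtain the inexact gradient $\widetilde{\nabla}g(x)=\nabla_x f(x,\widetilde{y})$.

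To verify the inexact model, I would start from Remark \ref{rem2}, which gives $g$ a clean $(\delta_0, L, \mu_x)$-model, and then replace $\nabla g(x_1)$ by $\widetilde{\nabla}g(x_1)$ inside both inequalities of \eqref{model}. By \eqref{L_xy} and \eqref{acc_y}, $\|\nabla g(x_1)-\widetilde{\nabla}g(x_1)\|_2 \leq L_{xy}\widetilde{\Delta}^{\nu}$, so Cauchy--Schwarz together with $\|x_2-x_1\|_2\leq D$ bounds the resulting cross term by $D\cdot L_{xy}\widetilde{\Delta}^{\nu}$. Setting $\Delta := L_{xy}\widetilde{\Delta}^{\nu}$, this yields a $(\delta_0+D\Delta,\,L,\,\mu_x)$-model, i.e.\ the announced $\delta = \delta_0+D\Delta$. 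Choosing $\delta_0=\Theta(\varepsilon)$ fixes $L$ via Remark \ref{rem2}, and picking $\widetilde{\Delta}$ small enough forces $D\Delta=\mathcal{O}(\varepsilon)$; hence $\delta=\mathcal{O}(\varepsilon)$.

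For the complexity count, the Fast Gradient Method for $\mu_x$-strongly convex problems with an inexact $(\delta, L, \mu_x)$-model returns an $\varepsilon$-solution of \eqref{modif} in
$$
N_{\mathrm{out}}=\mathcal{O}\!\left(\sqrt{L/\mu_x}\,\log\frac{2LD^2}{\varepsilon}\right)
$$
outer iterations. Since the inner problem \eqref{maximization_problem} is $\mu_y$-strongly concave and $L_{yy}$-smooth by \eqref{L_yy}, accelerated gradient produces $\widetilde{y}$ fulfilling \eqref{acc_y} in
$$
N_{\mathrm{in}}=\mathcal{O}\!\left(\sqrt{L_{yy}/\mu_y}\,\log\frac{2L_{yy}R^2}{\varepsilon}\right)
$$
steps, because the required $\widetilde{\Delta}$ enters only logarithmically and any polynomial scaling in $\varepsilon$ is absorbed into the log. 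Multiplying $N_{\mathrm{out}}\cdot N_{\mathrm{in}}$ gives the announced bound, and the passage from an $\varepsilon$-solution of \eqref{modif} to the duality-gap formulation \eqref{eps_solut_saddle} is standard given the strong convexity--concavity of $f$ (via \eqref{str_conc}).

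The main obstacle is the balancing in the previous paragraph: the inner tolerance $\widetilde{\Delta}$ must be small enough to keep the cumulative outer error $\mathcal{O}(\varepsilon)$ and $L$ controlled through Remark \ref{rem2}, yet the same choice must enter $N_{\mathrm{in}}$ only under a logarithm. It is precisely this balancing that produces the clean product of two logarithms in the final estimate.
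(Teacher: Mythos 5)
Your proposal follows essentially the same route as the paper's proof: the same outer/inner decomposition, the same perturbation of the exact $(\delta_0,L,\mu_x)$-model by bounding $\|\nabla g(x_1)-\widetilde{\nabla}g(x_1)\|_2\leq L_{xy}\widetilde{\Delta}^{\nu}=\Delta$ via \eqref{L_xy} and Cauchy--Schwarz with the diameter $D$, the same $\Theta(\varepsilon)$ balancing of $\delta_0$ and $D\Delta$ against the $\delta(1+\sqrt{L/\mu_x})$ error term of the inexact Fast Gradient Method, and the same product of outer and inner complexities. The argument is correct; your explicit remark on passing from the outer accuracy to the duality gap \eqref{eps_solut_saddle} is a point the paper only asserts implicitly.
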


\begin{proof}
Since the problem $\max\limits_{y\in Q_y}f(x_1,y)$ is smooth (\eqref{L_yy} holds) and $\mu_y$-strongly concave, one can achieve any arbitrary accuracy in \eqref{acc_y}, furthermore, in linear time \cite{Gasnikov}. More formally, for any $\widetilde{\Delta} >0$, in particular, $\widetilde{\Delta} = \left(\frac{\Delta}{L_{xy}}\right)^{\frac{1}{\nu}}$, there exists  $\widetilde{y}$, such that
$$
  \|\widetilde{y}-y_1\|_2 \leq \left(\frac{\Delta}{L_{xy}}\right)^{\frac{1}{\nu}}.
$$

Then
\begin{equation}
\|\widetilde{\nabla} g(x_1) - \nabla g(x_1)\|_{2} = \|\nabla_x f(x_1,\widetilde{y}) - \nabla_x f(x_1,y_1)\|_{2}\leq 
L_{xy}\|\widetilde{y} - y_1\|^{\nu}_2 \leq \Delta.
\end{equation}

Thus, taking into account that $$\langle \nabla g(x_1), x_2-x_1 \rangle = \langle \nabla g(x_1) - \widetilde{\nabla} g(x_1), x_2-x_1 \rangle + \langle \widetilde{\nabla} g(x_1), x_2-x_1 \rangle,$$ we get:
 
\begin{dmath}\label{mod_res}
\frac{\mu_x}{2}\|x_2-x_1\|_2^2 + \langle \widetilde{\nabla} g(x_1),x_2-x_1 \rangle + g(x_1) - D\Delta-\delta_0\leq\\ \leq g(x_2) \leq g(x_1) + \langle \widetilde{\nabla} g(x_1),x_2-x_1 \rangle + \frac{L}{2}\|x_2-x_1\|_2^2 + D\Delta + \delta_{0}.
\end{dmath}

The inequality \eqref{mod_res} means, that $g(x)$ admits the inexact $(\delta,L,\mu_x)$-model with $\delta = (D\Delta + \delta_{0})$ and $L$ defined in Remark \ref{rem2}.

It is well known \cite{Gasnikov}, that using Fast Gradient Method for the described construction, after $k$ iterations, one can obtain the following accuracy of the solution:
$$g(x^k) - g(x^*) \leq LR^2\exp{\left(-\frac{k}{2}\sqrt{\frac{\mu_x}{L}}\right)} + \delta\left(1+\sqrt{\frac{L}{\mu_x}}\right).$$

As noted above, it is possible to achieve an arbitrarily small value of $\widetilde{\Delta}$, more precisely, such $\widetilde{\Delta}$, that

$$
DL_{xy}\widetilde{\Delta}^{\nu} +\delta_0= D\Delta+\delta_0 = \delta \leq \frac{\varepsilon}{2\left(1+ \sqrt{\frac{L}{\mu_x}}\right)}.
$$
In other words, we can obtain the appropriate $\widetilde{\Delta}$, such that $\delta$ will be comparable to $\varepsilon$. For example, we can put
$$
\delta_0 \leqslant \frac{\varepsilon}{4\left(1+ \sqrt{\frac{L}{\mu_x}}\right)}, \quad \Delta \leqslant \frac{\varepsilon}{4D\left(1+ \sqrt{\frac{L}{\mu_x}}\right)}.
$$

Thus, after no more than $k =2\sqrt{\frac{L}{\mu_x}} \log{\frac{2LR^2}{\varepsilon}}$ iterations we obtain an\\ $\varepsilon$-solution \eqref{eps_solut_saddle} of the considered saddle point problem \eqref{main}.

The total number of iterations (including achieving the appropriate precision of $\widetilde{\Delta}$) is expressed as follows:
$$
    \mathcal{O}\left(\sqrt{\frac{L}{\mu_x}}\cdot\sqrt{\frac{L_{yy}}{\mu_y}}\cdot \log\frac{2L_{yy}R^2}{\varepsilon}\cdot\log{\frac{2LD^2}{\varepsilon}}\right),
$$
where $L = \tilde{L}\left( \frac{\tilde{L} (1-\nu)}{2\varepsilon} \right)^{1-\nu},
\tilde{L} = \left( L_{xy}\left(\frac{2L_{xy}}{\mu_y}\right)^{\frac{\nu}{2-\nu}}+L_{xx}D^{\frac{\nu-\nu^2}{2-\nu}} \right).$
\end{proof}



\section{Comparison of Theoretical Results for Accelerated Method and the Universal Proximal Method for Saddle-Point Problems with Generalized Smoothness}\label{sect_comparision}

Let us investigate the effectiveness of the proposed method for solving strongly convex--concave saddle point problems in comparison with the Universal Algorithm, which was recently proposed in \cite{UMP} to solve variational inequalities. We have to start with the problem statement and define all basics concerning variational inequalities and Proximal Setup. Note, that in Sect. \ref{section_Saddle_Point_Problem} we used exclusively the Euclidean norm, while results of Sect. \ref{sect_comparision} and Sect. \ref{sect_mirror_VI} hold for an arbitrary norm.

Let $E$ be some finite-dimensional vector space, $E^*$ be its dual. Let us choose some norm $\|\cdot\|$ on $E$. Define the dual norm $\|\cdot\|_*$ as follows:
$$\|\phi\|_{*} = \max\limits_{\|x\|\leq 1} \{ \langle \phi, x \rangle\},$$
where $\langle \phi, x \rangle$ denotes the value of the linear function $\phi \in E^*$ at the point $x\in E$.

Let $X\subset E$ be a closed convex set 
and $g(x):X\rightarrow E^*$ be a monotone operator, i.e.
\begin{equation}
    \left\langle g(x) -g(y), x- y \right\rangle \geq 0 \quad \forall x,y \in X.
\end{equation}



We also need to choose a so-called prox-function $d(x)$, which is continuously differentiable and convex on $X$, and the corresponding Bregman divergence, which can be understood as an analogue of the distance, defined as follows: 
\begin{equation}\label{Bregmann_def}
    V(y,x) = V_d(y,x) = d(y) - d(x) - \langle \nabla d(x), y - x \rangle \quad \forall x, y \in X.
\end{equation}

In \cite{UMP}, the authors proposed the Universal Proximal method ({\tt UMP}, this method is listed as Algorithm 1, below) for solving the problem \eqref{VI} with inexactly given operator. In details, suppose that there exist some $\delta >0, L(\delta)>0$, such that, for any points $x,y,z\in X$, we are able to calculate $\Tilde{g}(x,\delta), \Tilde {g}(y,\delta) \in E^*$, satisfying
\begin{equation}
	\label{eq:g_or_def}
	\langle \Tilde{g}(y,\delta) - \tilde{g}(x,\delta), y - z \rangle \leq \frac{L(\delta)}{2}\left(\|y-x\|^2 + \|y-z\|^2\right) + \delta \quad \forall z \in Q.
\end{equation}

In addition, there was considered the possibility of using the restarted version of the method ({\tt Restarted UMP}, see Algorithm 2 below) in the case of $\mu$--strongly monotone operator $g$:
$$
\langle g(x) - g(y), x-y\rangle \geq \mu \|x-y\|^2 \quad \forall x, y \in X.
$$
We additionally assume that $\argmin_{x \in X}d(x) = 0$ and  $d(\cdot)$ is bounded on the unit ball in the chosen norm $\|\cdot\|$, more precisely
$$
d(x) \leq \frac{\Omega}{2} \quad \forall x \in X: \|x\|\leq 1,
$$
where $\Omega$ is a known constant.

\begin{algorithm}[h!]
\caption{Universal Mirror Prox (\tt{UMP})}
\label{Alg:UMP}
\begin{algorithmic}[1]
   \REQUIRE $\varepsilon > 0$, $\delta >0$, $x_0 \in X$, initial guess $L_0 >0$, prox-setup: $d(x)$, $V(x,z)$.
   \STATE Set $k=0$, $z_0 = \arg \min_{u \in Q} d(u)$.
   \FOR{$k=0, 1,...$}
			\STATE Set $M_k=L_k/2$.
			\STATE Set $\delta = \frac{\varepsilon}{2}$.
			\REPEAT
				\STATE Set $M_k=2M_k$.
				\STATE Calculate $\tilde{g}(z_k,\delta)$ and
				\begin{equation}
				w_k={\mathop {\arg \min }\limits_{x\in Q}} \left\{\langle \tilde{g}(z_k,\delta),x \rangle + M_kV(x,z_k) 		 \right\}.
				\label{eq:UMPwStep}
				\end{equation}
				\STATE Calculate $\tilde{g}(w_k,\delta)$ and
				\begin{equation}
				z_{k+1}={\mathop {\arg \min }\limits_{x\in Q}} \left\{\langle \tilde{g}(w_k,\delta),x \rangle + M_kV(x,z_k) 		 \right\}.
				\label{eq:UMPzStep}
				\end{equation}
			\UNTIL{
			\begin{equation}
			\langle \tilde{g}(w_k,\delta) - \tilde{g}(z_k,\delta), w_k - z_{k+1} \rangle \leq \frac{M_k}{2}\left(\|w_k-z_k\|^2 + \|w_k-z_{k+1}\|^2\right) + \frac{\varepsilon}{2}+\delta.
			\label{eq:UMPCheck}
			\end{equation}}
			\STATE Set $L_{k+1}=M_k/2$, $k=k+1$.
   \ENDFOR
		\ENSURE $z_k = \frac{1}{\sum_{i=0}^{k-1}M_i^{-1}}\sum_{i=0}^{k-1}M_i^{-1}w_i$.
\end{algorithmic}
\end{algorithm}

\begin{algorithm}[htp]
	\caption{Restarted Universal Mirror Prox ({\tt Restarted UMP}).}
	\label{Alg:RUMP}
	\begin{algorithmic}[1]
		\REQUIRE $\varepsilon > 0$, $\mu >0$, $\Omega$ : $d(x) \leq \frac{\Omega}{2} \ \forall x\in Q: \|x\| \leq 1$; $x_0,\; R_0 \ : \|x_0-x_*\|^2 \leq R_0^2.$
		\STATE Set $p=0,d_0(x)=R_0^2d\left(\frac{x-x_0}{R_0}\right)$.
		\REPEAT
		\STATE Set $x_{p+1}$ as the output of {\tt UMP} for monotone case with prox-function $d_{p}(\cdot)$ and stopping criterion $\sum_{i=0}^{k-1}M_i^{-1}\geq \frac{\Omega}{\mu}$.
		\STATE Set $R_{p+1}^2 = R_0^2 \cdot 2^{-(p+1)} + 2(1-2^{-(p+1)})\frac{\varepsilon}{4\mu}$.
		\STATE Set $d_{p+1}(x) \leftarrow R_{p+1}^2d\left(\frac{x-x_{p+1}}{R_{p+1}}\right)$.
		\STATE Set $p=p+1$.
		\UNTIL			
		$p > \log_2\left(\frac{2R_0^2}{\varepsilon}\right).$	
		\ENSURE $x_p$.
	\end{algorithmic}
\end{algorithm}

\begin{remark}\label{saddle_VI}(Connection between saddle point problem and VI). Any saddle point problem 
$$\min\limits_{x\in Q_x}\max\limits_{y\in Q_y}f(x,y)$$
can be reduced to a variational inequality problem by considering the following operator:
\begin{equation}\label{SP_VI}
g(z)=
\begin{pmatrix}
\nabla_x f(x,y)\\
-\nabla_y f(x,y)
\end{pmatrix}, \; z=(x,y) \in Q:=Q_x \times Q_y.
\end{equation}
\end{remark}


\begin{remark}

{\tt Restarted UMP} method returns a point $x_p$ such that $$\|x_p-x_*\|^2 \leq \varepsilon + \frac{2\delta}{\mu}.$$ 
Moreover, the number of calls to Algorithm 1 ({\tt UMP}) while running Algorithm 2 ({\tt Restarted UMP}) does not exceed 
$$\inf_{\nu \in [0,1]} \left\lceil  \left(\frac{L_{\nu}}{\mu}\right)^{\frac{2}{1+\nu}}\cdot \frac{2^{\frac{2}{1+\nu}}\Omega }{\varepsilon^{\frac{1-\nu}{1+\nu}}} \cdot \log_2 \frac{2 R_0^2}{\varepsilon}\right\rceil.$$
Note, that due to the adaptivity of {\tt UMP}, in practice the number of calls may decrease (it will be shown experimentally later).
\end{remark}

\begin{remark}
Note, that for $\nu = 0$  the convergence rate of the {\tt Restarted UMP} and the accelerated method, introduced in Sect. \ref{section_Saddle_Point_Problem}, coincides, while for $\nu>0$ the asymptotic of the proposed accelerated method is better.
\end{remark}

\subsection{Numerical Experiments for Restarted Universal Mirror Prox Method}
The problem of constrained minimization of convex functionals arises and attracts widespread interest in many areas of modern large-scale optimization and its applications \cite{Robust_Truss,Shpirko_2014}.


In this subsection, in order to demonstrate the performance of the {\tt Restarted UMP},  we consider an example of the Lagrange saddle point problem induced by a problem with geometrical nature, namely, an analogue of the well-known {\it smallest covering ball problem} with non-smooth functional constraints. This example is equivalent to the following non-smooth convex optimization problem with functional constraints 
\begin{equation}\label{problem_smallest}
	\min_{x \in Q} \left\{f(x):= \max_{1\leq k \leq N} \|x - A_k\|_2^2; \; \varphi_p(x) \leq 0, \; p=1,...,m \right\},
\end{equation}
where $A_k \in \mathbb{R}^n, k=1,...,N$ are given points and $Q$ is a convex compact set. Functional constraints $\varphi_p$,  for $p=1,...,m$, have the following form:
\begin{equation}\label{linear_constraints}
	\varphi_p(x):= \sum_{i=1}^n \alpha_{pi}x_i^2 -5, \; p = 1, ..., m.
\end{equation}

For solving such problems there are various first-order methods, which guarantee achieving an acceptable precision $\varepsilon$ by function with complexity $\mathcal{O}\left(\varepsilon^{-1}\right)$. We present the results of some numerical experiments, which demonstrate the effectiveness of the {\tt Restarted UMP}. Remind, that we analyze how the restart technique can be used to improve the convergence rate of the {\tt UMP}, and show that in practice it works with a convergence rate smaller than $\mathcal{O}\left(\varepsilon^{-1}\right)$, for some different randomly generated data associated with functional constraints \eqref{linear_constraints}. 




The corresponding Lagrange saddle point problem of the problem \eqref{problem_smallest} is defined as follows
$$
    \min_{x \in Q} \max_{\overrightarrow{\lambda} = (\lambda_1,\lambda_2,\ldots,\lambda_m)^T \in \mathbb{R}^m_+} L(x,\lambda):=f(x)+\sum\limits_{p=1}^m\lambda_p\varphi_p(x) -\frac{1}{2}\sum\limits_{p=1}^{m}\lambda_p^2.
$$ 
This problem is satisfied to \eqref{L_xx} -- \eqref{L_yy} for $\nu = 0$ and equivalent to the variational inequality with the monotone bounded operator
$$
    G(x,\lambda)=
    \begin{pmatrix}
    \nabla f(x)+\sum\limits_{p=1}^m\lambda_p\nabla\varphi_p(x), \\
    (-\varphi_1(x) +\lambda_1,-\varphi_2(x)+\lambda_2,\ldots,-\varphi_m(x) +\lambda_m)^T
    \end{pmatrix},
$$
where $ \nabla f$ and $\nabla\varphi_p$ are subgradients of $f$ and $\varphi_p$. For simplicity, let us assume that there exists (potentially very large) bound for the optimal Lagrange multiplier $\overrightarrow{\lambda}^*$. Thus, we are able to compactify the feasible set for the pair $(x,\overrightarrow{\lambda})$ to be an Euclidean ball of some radius.

To demonstrate the independence on the choice of experimental data, the coefficients $\alpha_{pi}$ in \eqref{linear_constraints} are drawn randomly from four different distributions.

\begin{itemize}
    \item \textbf{Case 1:} the standard exponential distribution.
	
	\item \textbf{Case 2:} the Gumbel distribution with mode and scale equal to zero and 1, respectively.
	
	\item \textbf{Case 3:} the inverse Gaussian distribution with mean and scale equal to 1 and 2, respectively. 
	
	 \item \textbf{Case 4:} from the discrete uniform distribution in the half open interval $[1,6)$.
\end{itemize}

We run {\tt Restarted UMP} for different values of  $n$ and  $m$ with standard Euclidean prox-structure and the starting point $(x^0, \overrightarrow{\lambda}^0) = \frac{1}{\sqrt{m+n}} \textbf{1} \in \mathbb{R}^{n+m}$, where $\textbf{1} $ is the vector of all ones. Points $A_k$, $k=1,...,N$, are chosen randomly from the uniform distribution over $[0,1)$. For each value of the parameters the random data was drawn 5 times and the results were averaged. 
The results of the work of {\tt Restarted UMP} are presented in Table \ref{table_results1}. 
These results demonstrate the number of iterations produced by {\tt Restarted UMP} to reach the $\varepsilon$-solution of the problem \eqref{problem_smallest} with \eqref{linear_constraints}, the running time of the algorithm in seconds, qualities of the solution with respect to the objective function $f$ ($f^{\text{best}} := f(x_{\text{out}})$) and the functional constraints $g$ ($g^{\text{out}} := g(x_{\text{out}})$), where $x_{\text{out}}$ denotes the output of the compared algorithms, with different values of $\varepsilon \in \{ 1/2^i, i=1,2,3,4,5,6\}$. 

All experiments were implemented in Python 3.4, on a computer fitted with Intel(R) Core(TM) i7-8550U CPU @ 1.80GHz, 1992 Mhz, 4 Core(s), 8 Logical Processor(s). RAM of the computer is 8 GB.

\begin{table}[htp]
	\centering
	\caption{The results of {\tt Restarted UMP}, for the problem \eqref{problem_smallest} with constraints \eqref{linear_constraints}  for \textbf{Cases 1 -- 4}.}
	\begin{tabular}{|c|c|c|c|c||c|c|c|c|}
		\hline
		\multirow{2}{*}{} & \multicolumn{4}{c||}{{\textbf{Case 1:} $n = 1000, m = 50, N = 10$}} & \multicolumn{4}{c|}{\textbf{Case 2:} $n = 1000, m = 50, N = 10$} \\ \cline{2-9}  
		$\frac{1}{\varepsilon}$	& Iter. &  Time (sec.) &$f^{\text{best}}$ & $g^{\text{out}}$ & Iter. & Time (sec.)& $f^{\text{best}}$&$g^{\text{out}}$\\ \hline
		2 &9  &0.392 &324.066325&-4.349744 &12 &0.517 &   324.889649&-3.965223     \\ 
		4 &12 &0.519 &324.066312  &-4.349732 &16 &0.630 &324.889634 & -3.965213   \\ 
		8 &15 &0.599 &324.066305&-4.349692 &20 &0.807 &324.889621 & -3.965197   \\ 
		16&18&0.972 &324.066295&-4.349680 & 24&0.977 &324.889598 & -3.965165  \\
		32&21  &0.984&324.066291&-4.349653 &28 &1.224 & 324.889540&-3.965223\\ 
		64&24  &1.357&324.066286  &-4.349598 &32 &1.317 &   324.889527&-3.965111 \\ \hline
		\multirow{2}{*}{} & \multicolumn{4}{c||}{{\textbf{Case 3:} $n = 500, m = 25, N = 10$}} & \multicolumn{4}{c|}{\textbf{Case 4:} $n = 500, m = 25, N = 10$} \\ \cline{2-9} 
		 \hline
		 2 &624  &20.321 &153.846214 &-4.398782 &832 &26.629 &158.210875 & -2.610276    \\ 
		4 &1319  &39.045 &153.842306 &-4.398106 &1702 &51.258 &158.201645 &-2.601072    \\ 
		8 &2158  &58.919  &153.830012  &-4.397387 &4144 &96.136 &158.190455 &-2.599223    \\ 
		16&4298  &117.383 &153.827731 &-4.397271 &6145 &174.713 &158.188211 &-2.598865   \\
		32&8523 &264.777 &153.826829 &-4.397226 &12081 &351.520 &158.187255 &-2.598713 \\ 
		64&17584  &554.480 &153.826382 &-4.397204 &30186 &768.861 &158.186744 &-2.598628 \\ \hline
		
	\end{tabular}
	\label{table_results1}
\end{table}

From the results in Table \ref{table_results1} we can see that the work of {\tt Restarted UMP}, does not only dependent on the dimension $n$ of the problem, the number of the constraints $\varphi_p,\, p=1, ..., m$ and the number of the points $A_k,\, k=1, ..., N$, but also depends on the shape of the data, that generated to the coefficients $\alpha_{pi}$ in \eqref{linear_constraints}. Also, these results demonstrate how the {\tt Restarted UMP}, due to its adaptivity to the level of smoothness of the problem, in practice works with a convergence rate smaller than $\mathcal{O}\left(\varepsilon^{-1}\right)$, for all different cases 1--4.




\section{Mirror Descent for Variational Inequalities with Relatively Bounded Operator}\label{sect_mirror_VI}

Let $g(x): X \to E^*$, be an operator, given on some convex compact set $X \subset E$. Recall that $g(x)$ is bounded on $X$, if there exists $M> 0$, such that
$$\| g(x)\|_*\leq M \quad \forall x \in X.$$
We can replace the classical concept of the boundedness of an operator by the so-called  Relative boundedness condition as following.

\begin{definition}(The Relative boundedness).
An operator $g(x):X \to E^*$ is Relatively bounded on $X$, if there exists $M> 0$, such that
\begin{equation}\label{Rel_Boud}
\langle g(x),y-x\rangle \leq M\sqrt{2V(y,x)} \quad \forall x,y \in X,
\end{equation}
where $V(y,x)$ is the Bregmann divergence, defined in \eqref{Bregmann_def}.
\end{definition}

Let us note the following special case of the definition.
\begin{remark}The Relative boundedness condition can be rewritten in the following way:
$$\| g(x)\|_*\leq \frac{M\sqrt{2V(y,x)}}{\|y-x\|} \ \ y\neq x.$$
\end{remark}

In addition to the Relative boundedness condition, suppose that the operator $g(x)$ is $\sigma$-monotone.

\begin{definition}($\sigma$-monotonicity). 
Let $\sigma>0$. The operator $g(x):X \to E^*$ is $\sigma$-monotone, if the following inequality holds:
\begin{equation}\label{d_monot}
\langle g(y)-g(x),y-x\rangle \geq -\sigma \quad \forall x, y \in X.
\end{equation}
\end{definition}
For example, we can consider $g = \nabla_{\sigma} f$ for $\sigma$-subgradient $\nabla_{\sigma} f(x)$ of the convex function $f$ at the point $x \in X$: $f(y) - f(x) \geqslant \langle  \nabla_{\sigma} f(x), y - x \rangle - \sigma $ for each $y \in X$ (see e.g. \cite{Polyak}, Chapter 5). Let us propose an analogue of the Mirror Descent algorithm for variational inequalities with Relatively bounded and $\sigma$-monotone operator. For any $x \in X$ and $p \in E^*$, we define the Mirror Descent step $Mirr_x(p)$ as follows:
$$Mirr_x(p) = \argmin\limits_{y\in X}\Big\{ \langle p,y \rangle +V(y,x)\Big\}.$$

\begin{algorithm}
\caption{Mirror Descent method for variational inequalities.}
\label{algggrest}
\begin{algorithmic}[1]
\REQUIRE $\varepsilon >0, \, M >0; x_0$ and $ R\ \text{such that}\ \max_{x \in X} V(x,x_0)\leq R^2.$
\STATE Set $h=\frac{\displaystyle\varepsilon}{\displaystyle M^2}.$
\STATE Initialization $k=0.$
\REPEAT
\STATE $x_{k+1}=Mirr_{x_k}\Big(hg(x_k)\Big).$
\STATE Set $k = k+1. $ 
\UNTIL {$k\geq N = \frac{\displaystyle 2RM^2}{\displaystyle\varepsilon^2}.$}
\ENSURE $\Tilde x = \frac{1}{N}\sum_{k=0}^{N-1}x_k.$
\end{algorithmic}
\end{algorithm}

The following theorem describes the effectiveness of the proposed Algorithm \ref{algggrest}.
\begin{theorem}
       Let  $g: X \to E^*$ be Relatively bounded and $\sigma$--monotone operator, i.e. \eqref{Rel_Boud} and \eqref{d_monot} hold. Then after no more than $$N = \frac{2RM^2}{\varepsilon^2}$$ iterations of Algorithm \ref{algggrest}, one can obtain an $(\varepsilon+\sigma)$--solution \eqref{eps_VI} of the problem \eqref{VI}, i.e.
      $$
    \max_{x\in X}\langle g(x),\Tilde{x}-x \rangle \leq \varepsilon+\sigma.
$$
\end{theorem}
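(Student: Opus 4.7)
The plan is to follow the standard Mirror Descent analysis for variational inequalities, adapted to the relatively bounded and $\sigma$-monotone setting. I would start from the optimality condition of the proximal step $x^{k+1} = Mirr_{x^k}(h g(x^k))$. Writing out the first-order condition and invoking the three-point identity for Bregman divergence
$$\langle \nabla V(x^{k+1},x^k), u - x^{k+1}\rangle = V(u,x^k) - V(u,x^{k+1}) - V(x^{k+1},x^k),$$
I would obtain, for every $u \in X$,
$$h\langle g(x^k), x^{k+1} - u\rangle \leq V(u,x^k) - V(u,x^{k+1}) - V(x^{k+1},x^k).$$

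Next I would split $\langle g(x^k), x^k - u\rangle = \langle g(x^k), x^k - x^{k+1}\rangle + \langle g(x^k), x^{k+1} - u\rangle$ and use the Relative boundedness \eqref{Rel_Boud} (in the equivalent form given in the Remark, which combined with Cauchy--Schwarz yields a two-sided bound) to estimate
$$h\langle g(x^k), x^k - x^{k+1}\rangle \leq hM\sqrt{2V(x^{k+1},x^k)}.$$
A direct application of Young's inequality $ab \leq \tfrac12 a^2 + \tfrac12 b^2$ with $a = \sqrt{2V(x^{k+1},x^k)}$ and $b = hM$ gives $hM\sqrt{2V(x^{k+1},x^k)} \leq V(x^{k+1},x^k) + \tfrac{h^2M^2}{2}$, so that the $V(x^{k+1},x^k)$ terms cancel and one is left with
$$h\langle g(x^k), x^k - u\rangle \leq V(u,x^k) - V(u,x^{k+1}) + \tfrac{h^2M^2}{2}.$$

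Summing from $k=0$ to $N-1$, the Bregman terms telescope, yielding
$$h \sum_{k=0}^{N-1}\langle g(x^k), x^k - u\rangle \leq V(u,x^0) + \tfrac{N h^2M^2}{2}.$$
To pass from $g(x^k)$ to $g(u)$ for the Minty-type characterization \eqref{eps_VI}, I would apply the $\sigma$-monotonicity \eqref{d_monot}, which gives $\langle g(u), x^k - u\rangle \leq \langle g(x^k), x^k - u\rangle + \sigma$. After dividing by $Nh$ and using convexity of $\langle g(u), \cdot - u\rangle$ to pull the average inside (so the left-hand side becomes $\langle g(u), \tilde{x} - u\rangle$), I would arrive at
$$\langle g(u), \tilde{x} - u\rangle \leq \frac{V(u,x^0)}{Nh} + \frac{hM^2}{2} + \sigma.$$
Substituting the prescribed step size $h = \varepsilon/M^2$ and the bound $V(u,x^0)\leq R$, the first two terms become $\frac{RM^2}{N\varepsilon}+\frac{\varepsilon}{2}$, and choosing $N \geq \frac{2RM^2}{\varepsilon^2}$ makes this at most $\varepsilon$, giving the claimed $(\varepsilon+\sigma)$-solution after taking maximum over $u \in X$.

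The main technical obstacle is applying Relative boundedness in the ``wrong'' direction: the definition \eqref{Rel_Boud} only provides an upper bound on $\langle g(x),y-x\rangle$, whereas the analysis needs an upper bound on $\langle g(x^k),x^k-x^{k+1}\rangle$. This is resolved by interpreting the condition through the equivalent form in the Remark, $\|g(x)\|_* \leq \frac{M\sqrt{2V(y,x)}}{\|y-x\|}$, which together with Cauchy--Schwarz gives the symmetric bound $|\langle g(x),y-x\rangle|\leq M\sqrt{2V(y,x)}$. Once this symmetric version is in hand, the rest of the proof is a routine Mirror Descent telescoping argument.
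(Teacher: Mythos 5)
Your proof is correct and takes essentially the same route as the paper: the one-step Mirror Descent inequality, telescoping, $\sigma$-monotonicity to replace $g(x^k)$ by $g(x)$, and the step size $h=\varepsilon/M^2$. Your write-up is in fact more complete, since the paper merely asserts the per-iteration bound $h\langle g(x^k),x^k-x\rangle \leq \frac{h^2M^2}{2}+V(x,x^k)-V(x,x^{k+1})$ ``according to the listing of the Algorithm,'' whereas you derive it from the three-point identity and correctly flag the sign subtlety --- \eqref{Rel_Boud} bounds $\langle g(x^k),x^{k+1}-x^k\rangle$ from above while the analysis needs a bound on its negative, so one must invoke the dual-norm form from the paper's Remark (a strictly stronger reading of relative boundedness); your only inherited blemish is the paper's own $R$ versus $R^2$ inconsistency in writing $V(u,x^0)\leq R$.
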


\begin{proof}
According to the listing of the Algorithm \ref{algggrest}, the following inequality holds:
$$
h\langle g(x_k),x_k-x\rangle \leq \frac{h^2M^2}{2}+V(x,x_k)-V(x,x_{k+1}).
$$

Taking summation over $k = 0,1,\ldots, N-1$, we get
$$
    h\sum_{k=0}^{N-1}\left\langle g(x_k), x_k-x\right\rangle \leq \sum_{k=0}^{N-1}\frac{h^2M^2}{2}+V(x,x_0)-V(x,x_{N})\leq 
$$
$$
   \quad \leq \frac{N}{2}h^2M^2+V(x,x_0).
$$
Due to the $\sigma$-monotonicity of $g$, we have
$$\langle g(x_k),x_k-x\rangle \geq \langle g(x),x_k-x\rangle - \sigma.$$
Whence,
$$
    h\sum_{k=0}^{N-1}\left(\left\langle g(x_k),x_k-x \right\rangle +\sigma\right) \geq h\left\langle g(x),\sum_{k=0}^{N-1} (x_k-x) \right\rangle = h\left\langle g(x),N(\Tilde{x}-x) \right\rangle,
$$
where $\Tilde{x}=\frac{1}{N}\sum_{k=0}^{N-1}x_k.$
Since
$$
    N h\langle g(x),\Tilde{x}-x \rangle \leq \frac{N}{2}h^2M^2+V(x,x_0)+Nh\sigma,
$$
we get
$$
    \langle g(x),\Tilde{x}-x \rangle \leq \frac{M^2h}{2}+\frac{V(x,x_0)}{Nh}+\sigma.
$$

As $h=\frac{\varepsilon}{M^2}$, after the stopping criterion $N\geq \frac{2RM^2}{\varepsilon^2}$ is satisfied, we obtain an $(\varepsilon + \sigma)$-solution of the problem \eqref{VI}.
\end{proof}

\section{Conclusions}

In the first part of the article we considered strongly convex-concave saddle point problems; using the concept of $(\delta,L,\mu_x)$--model we applied the Fast Gradient Method to obtain an $\varepsilon$-solution of the problem. We proved, that the proposed method can generate an $\varepsilon$-solution of the saddle point problem after no more than 
$$
    \mathcal{O}\left(\sqrt{\frac{L}{\mu_x}}\cdot \sqrt{\frac{L_{yy}}{\mu_y}}\cdot \log\frac{2L_{yy}R^2}{\varepsilon}\cdot\log{\frac{2LD^2}{\varepsilon}}\right)\quad\text{iterations,}
$$
where for $0 \leq \nu < 1$ $L = \tilde{L}\left(\frac{\tilde{L}(1-\nu)}{2\varepsilon}\right)^{1-\nu},
\tilde{L} = \left( L_{xy}\left(\frac{2L_{xy}}{\mu_y}\right)^{\frac{\nu}{2-\nu}}+L_{xx}D^{\frac{\nu-\nu^2}{2-\nu}} \right).$\\
We conducted some numerical experiments for the Universal Proximal algorithm and its restarted version and analyzed their asymptotics in comparison with the proposed method. 

Also, we considered Minty variational inequalities with Relatively bounded and $\sigma$--monotone operator. We introduced the modification of the Mirror Descent method and proved, that obtaining an $(\varepsilon+\sigma)$-solution will take no more than 
 $\frac{2RM^2}{\varepsilon^2}$ iterations.

\bigskip

\textbf{Acknowledgment:} The authors are grateful to Mahmoud Karafallah, who drew the authors' attention to a typo in the estimation of the constant $L$ from the first version of the article.

\end{document}